\newtheorem{theorem}{Theorem}[section]
\newtheorem{proposition}[theorem]{Proposition}
\newtheorem{corollary}[theorem]{Corollary}
\newtheorem{remark}[theorem]{Remark}
\newtheorem{definition}[theorem]{Definition}
\numberwithin{equation}{section}
\newcommand{\dlim}{\displaystyle\lim}
\begin{document}

\title{Closed Graph Theorem for Linear Random Operators}

\subjclass[2010]{46G25, 47H60}
\keywords{Stochastic Continuity, Probable Continuity, Linear Random Operators}

\begin{abstract}
In this work, which was inspired by the article \cite{TGF} by M. V. Velasco and A. R. Villena, we obtain a characterization for probably continuous operators and show that the probability of a linear random operator being continuous coincides with the probability of having a closed graph. Furthermore, we laid the foundations to start a new branch of mathematics, namely, Random Analysis.  
\end{abstract}

\maketitle

\begin{center}
	\textbf{Author: Kleber Soares Câmara}
\end{center}

\tableofcontents

\author{C\^amara., K. S.}\newline\indent
\address{Departamento de Matem\'{a}tica,\newline\indent
	Universidade Federal Rural do Semi-árido,\newline\indent
	59.625-900, Mossoró -- RN, Brazil.}\newline\indent
\email{kleber.soares@ufersa.edu.br}

\section{Introdução}

The study of random variables that take values in Banach spaces is very important in the theory of random equations \cite{Bharucha, Skorohold}, where stochastic continuity is an essential property. Stochastically continuous random linear operators behave like continuous in some sense that can be quantified with some precision. We will see that the "stochastic size" of the separation subspace of a linear random operator designates its continuity.


\section{Random variables}

Let's look at some new definitions and others that can be found in \cite{TGF, Bharucha}. 

Classical probability theory introduces the concept of random variables as being measurable functions that take on values in the set $\mathbb R$ of the real numbers. Generally speaking, random variables that take on values in a Banach space are measurable functions; 
Throughout this work, we will consider a fixed probability space $(\Omega, \mathcal{A}, \mathbb{P}),$ where $\Omega$ is an arbitrary set, $\mathcal{A}$ a $\sigma$--algebra of subsets of $\Omega$ and $\mathbb{P}$ a measure of probability. 

Let $(Y, \mathfrak B)$ be a measurable space where $Y$ is a Banach space and $\mathfrak B$ is the $\sigma$--algebra of all Borel subsets of $Y.$

\begin{definition}
	A map $x:\Omega\to Y$ is said to be a random variable with values in $Y$ if the inverse image of every Borel set $B,$ under the map $x,$ belongs to $\mathcal{A},$ i.e. if $x^{-1}(B)\in\mathcal A$ for all $B\in\mathfrak B.$ 
\end{definition}

In other words, a random variable valued in $Y$ is a measurable Borel function valued in the Banach space $Y$; When $Y=\mathbb{R},$ the above definition matches the definition of ordinary random variable. 

%
%
%
%
%
A randomization $\mathcal{R}_{\Omega}(Y)$ ($\mathcal{R}(Y)$ if there is no confusion) is the linear space of all random variables $Y- $valued over $\left(\Omega, \mathcal{A}, \mathbb{P}\right)$ (where matching elements are almost certainly equivalent). As in \cite{Velasco}, we consider that all randomization is equipped with the topology of convergence in probability, which is metrizable (see \cite[Chapter 2]{Ledoux}). 

\section{Characterizing the probable continuity of linear random operators}

Given $y\in \mathcal{R}_{\Omega}(Y)$ and given $\tau\in\mathbb{R},$ we use the notation 
$$\left[y\ge\tau\right]=\{\omega\in\Omega; y(\omega)\ge\tau\},$$
as well as
$$\left[\|y\|\ge\tau\right]=\{\omega\in\Omega; \|y(\omega)\|\ge\tau\}.$$
Analogous notations are used when replacing the aforementioned $\ge$ with $\le, =, >$ or $<.$ Also, given $A, B\in\mathcal{A},$ we use the notation $$A,B=A\cap B.$$

\begin{definition}
	A sequence $(y_n)$ in $\mathcal{R}(Y)$ is said to converge in probability to an element $y$ in $\mathcal{R}(Y)$ when
	$$\lim_{n\to \infty}\mathbb{P}\left[\|y_n-y\|\ge\tau\right]=0,\quad\forall \tau>0$$
	or, more specifically, when for all $\tau>0$ and $ 0<\varepsilon \le 1,$ there is $N\in\mathbb{N}$ such that
	\[
	\mathbb{P}\left[\|y_n-y\|\ge\tau\right]<\varepsilon
	\]
	whenever it is $n>N.$
\end{definition}

It is important to note that, due to the equality 
$$\mathbb{P}\left[\|y_n-y\|<\tau\right]+\mathbb{P}\left[\|y_n-y\|\ge\tau\right]=1,$$
it follows that $\lim_{n\to \infty}\mathbb{P}\left[\|y_n-y\|\ge\tau\right]=0,\quad\forall \tau>0,$ equivalent to 
$$\lim_{n\to \infty}\mathbb{P}\left[\|y_n-y\|<\tau\right]=1,\quad\forall \tau>0;$$
In other words, for all $\tau>0$ and $ 0<\varepsilon \le 1,$ there is $N\in\mathbb{N}$ such that
\[
\mathbb{P}\left[\|y_n-y\|<\tau\right]\ge\varepsilon
\]
for all $n>N.$
\begin{definition}
	An operator $T:X\to \mathcal{R}(Y),$ called the random operator of $X$ on $Y,$ is said to be linear if, for all constant $\alpha, \beta,$ 
	$$\mathbb{P}\left[T(\alpha x+\beta y)=\alpha T(x)+\beta T(y)\right]=1, \quad\forall x,y\in X.$$
\end{definition}
In particular, note that $\mathbb{P}\left[T(0)=0\right]=1,$ since $\mathbb{P}\left[T(0 x+0 y)=0 T(x)+0 T(y)\right]=1$ for all $x,y\in X.$ Also note that the inclusion $$\left[T(\alpha x+\beta y)=\alpha T(x)+\beta T(y)\right]\subset \left[\|T(\alpha x+\beta y)\|=\|\alpha T(x)+\beta T(y)\|\right]$$ will give us $\mathbb{P}\left[\|T(\alpha x+\beta y)\|=\|\alpha T(x)+\beta T(y)\|\right]=1$ always that the random operator $T$ is linear. Finally, it's important to remember that given $A, B\in \mathcal{A}$ with, say, $\mathbb{P}(B)=1$ and using the notation $A,B=A\cap B,$ we have
$$\mathbb{P}\left(A\right)=\mathbb{P}\left(A,B\right),$$ 
since
$1=\mathbb{P}\left(B\right)=\mathbb{P}\left(A\cup B\right)=\mathbb{P}\left(A\right)+\mathbb{P}\left(B\right)-\mathbb{P}\left(A, B\right)=\mathbb{P}\left(A\right)+1-\mathbb{P}\left(A, B\right).$

We adopt the definition of stochastically continuous operator given in \cite{BHARUCHA-REID}, where the 
spaces $X$ and $Y$ as well as giving some more pertinent definitions concerning probable continuity, whose generality allows us to obtain stochastic continuity as one of its particular cases.

\begin{definition}\label{defII}
	Let $T:X\to \mathcal{R}_{\Omega}(Y)$ be a linear random operator. If there is 
	\item [1)] for all $\tau>0$ and $0<\varepsilon< \alpha,$ there is $\delta>0$ such that $\mathbb{P}\left[\|T(x)-T(x_0)\|<\tau\right]>\varepsilon$ whenever $x\in X$ and $\|x-x_0\|<\delta.$ In this case, we say that $T$ is probably continuous on $x_0$ with probability $\ge\alpha$ and we will denote this by
	$$\lim_{x\to x_0}\mathbb{P}\left[\|T(x)-T(x_0)\|<\tau\right]\ge\alpha.$$
	$T$ will be said to be probably continuous (with probability $\ge\alpha$) if it is said to be continuous at all $x_0\in X;$
	\item [2)] for every $0<\varepsilon<\alpha,$ there is $M_\varepsilon>0$ such that $\mathbb{P}\left[\|T(x)\|\le M_\varepsilon\right]>\varepsilon,$ for all $x\in B_{X}=\{x\in X; \|x\|\le 1\},$ we will say that T is probably bounded (with probability $\ge\alpha$); 
	\item  [3)] for every $0<\varepsilon<\alpha,$ there is $M_\varepsilon>0$ such that
	$$\mathbb{P}\left[\|T(x)-T(y)\|\le M_\varepsilon\|x-y\|\right]>\varepsilon, \quad\forall x,y\in X,$$
	we will say that T is probably lipschitzian (with probability $\ge\alpha$); 
	\item  [4)] given $\tau>0$ and $0<\varepsilon<\alpha,$ there is $\delta>0$ such that
	$$\mathbb{P}\left[\|T(x)-T(y)\|< \tau\right]>\varepsilon, \text{ whenever } x,y\in X \text{ and } \|x-y\|<\delta,$$
	we will say that $T$ is uniformly continuous probably (with probability $\ge\alpha$).
\end{definition}

\begin{definition}
	We will say that the operator $T$ is stochastically continuous (bounded, lipschitzian or uniformly continuous) if $\alpha = 1$ satisfies item $(1)$ of Definition \ref{defII} (respectively, satisfies items $(2), (3)$ or $(4)$ of the same definition). 
\end{definition}

In item $1$ of the definition above, if $0<\alpha<1$ is the largest number for which the limit $lim_{x\to x_0}\mathbb{P}\left[\|T(x)-T(x_0)\|<\tau\right]\ge\alpha,$ i.e. if $\alpha$ is such that for every $\alpha<\gamma\le1,$ there are $\tau > 0$ and $\alpha\le\varepsilon<\gamma$ such that for every $\delta>0$ there is $x\in X$ satisfying
$$\mathbb{P}\left[\|T(x)-T(x_0)\|<\tau\right]\le\varepsilon$$
with $\|x-x_0\|<\delta,$ we will say that $T$ is probably continuous with probability $\alpha$ and we will write
$$\lim_{x\to x_0}\mathbb{P}\left[\|T(x)-T(x_0)\|<\tau\right]=\alpha.$$
Thus, $T$ will be stochastically continuous if and only if,
$$\lim_{x\to x_0}\mathbb{P}\left[\|T(x)-T(x_0)\|<\tau\right]=1$$
for all $\tau>0,$ that is, if and only if $T$ is probably continuous with probability $1.$

Also note that item $(1)$ of the above definition can be reformulated due to the equality 
$$\mathbb{P}\left[\|T(x)-T(x_0)\|<\tau\right]+\mathbb{P}\left[\|T(x)-T(x_0)\|\ge\tau\right]=1,$$ for which
$$\mathbb{P}\left[\|T(x)-T(x_0)\|<\tau\right]>\varepsilon \Leftrightarrow \mathbb{P}\left[\|T(x)-T(x_0)\|\ge\tau\right]<1-\varepsilon$$
and consequently 
\begin{eqnarray}\label{eqv}
\dlim_{x\to x_0}\mathbb{P}\left[\|T(x)-T(x_0)\|<\tau\right]\ge\alpha \Leftrightarrow \dlim_{x\to x_0}\mathbb{P}\left[\|T(x)-T(x_0)\|\ge\tau\right]\le 1-\alpha;
\end{eqnarray}
thus, $T$ is probably continuous in $x_0\in X$ with probability $\ge\alpha$ if and only if $$\dlim_{x\to x_0}\mathbb{P}\left[\|T(x)-T(x_0)\|\ge\tau\right]\le 1-\alpha.$$ It is important to note that the equivalence \eqref{eqv} holds, in particular, for the sequential limit, that is,
$$\dlim_{n\to \infty}\mathbb{P}\left[\|T(x_n)-T(x_0)\|<\tau\right]\ge\alpha$$
equals
$$\dlim_{n\to \infty}\mathbb{P}\left[\|T(x_n)-T(x_0)\|\ge\tau\right]\le 1-\alpha$$
for every sequence $\left(x_n\right)_{n=1}^\infty$ in $X$ converging to $x_0.$ This last limit, in turn, means that for all $\tau>0$ and $0<\varepsilon< \alpha,$ $\mathbb{P}\left[\|T(x_n)-T(x_0)\|\ge\tau\right]<1-\varepsilon$ whenever $x_n\in X$ and $\|x_n-x_0\|<\delta.$

\begin{remark}[Duality]
	Comments similar to those made for item $(1)$ of Definition \ref{defII}, related to the exchange of $\mathbb{P}\left[\cdot\right]\ge\varepsilon$ for $\mathbb{P}\left[\cdot^c\right]<1-\varepsilon,$ can also be done for items $(2),(3)$ and $(4)$ of the same Definition.
\end{remark}

\begin{remark}\label{propC.O.P.}
	Let $T:X\to \mathcal{R}_{\Omega}(Y)$ be a linear random operator. Then $T$ is probably continuous at the origin with probability $\ge\alpha$, that is,
	$$\dlim_{x\to 0}\mathbb{P}\left[\|T(x)-T(0)\|<\tau\right]\ge\alpha, \textrm{ for all } \tau>0,$$
	if, and only if,
	$$\dlim_{x\to 0}\mathbb{P}\left[\|T(x)\|<\tau\right]\ge\alpha, \textrm{ for all } \tau>0.$$
	In fact, it suffices to show that 
	$$\mathbb{P}\left[\|T(x)-T(0)\|<\tau\right]=\mathbb{P}\left[\|T(x)\|<\tau\right].$$
	Since $\|T(x)\|-\|T(0)\|\le\|T(x)-T(0)\|,$ for all $x\in X,$ and $\mathbb{P}\left[\|T(0)\|=0\right]=1,$ we have
	\begin{eqnarray*}
		\mathbb{P}\left[\|T(x)-T(0)\|<\tau\right] &\le& \mathbb{P}\left[\|T(x)\|-\|T(0)\|<\tau\right]\\ &=& \mathbb{P}\left[\|T(x)\|-\|T(0)\|<\tau, \|T(0)\|=0\right]\\ &=& 
		\mathbb{P}\left[\|T(x)\|-0<\tau, \|T(0)\|=0\right]\\ &=&
		\mathbb{P}\left[\|T(x)\|<\tau\right],
	\end{eqnarray*}
	that is,  $\mathbb{P}\left[\|T(x)-T(0)\|<\tau\right]\le\mathbb{P}\left[\|T(x)\|<\tau\right].$ 
	
	Conversely, in view of the inequality $\|T(x)-T(0)\|-\|T(0)\|\le \|T(x)\|,$ we have 
	\begin{eqnarray*}
		\mathbb{P}\left[\|T(x)\|<\tau\right] &\le& \mathbb{P}\left[\|T(x)-T(0)\|+\|T(0)\|<\tau\right]\\ &\le& \mathbb{P}\left[\|T(x)-T(0)\|<\tau\right]
	\end{eqnarray*}
	i.e. $\mathbb{P}\left[\|T(x)\|<\tau\right]\le\mathbb{P}\left[\|T(x)-T(0)\|<\tau\right]$ and consequently $\mathbb{P}\left[\|T(x)-T(0)\|<\tau\right]=\mathbb{P}\left[\|T(x)\|<\tau\right].$
\end{remark}


It is important to note that when there is $\alpha=1$ satisfying the theorem above, we conclude that $T$ is stochastically continuous at the origin, that is,
$$\lim_{x\to 0}\mathbb{P}\left[\|T(x)-T(0)\|<\tau\right]=1, \textrm{ for all } \tau>0,$$
if, and only if,
$$\lim_{x\to 0}\mathbb{P}\left[\|T(x)\|<\tau\right]=1, \textrm{ for all } \tau>0.$$

Note that, taking into account the equivalences of type (\ref{eqv}), we have, for all $\tau>0,$
$$ \dlim_{x\to 0}\mathbb{P}\left[\|T(x)-T(0)\|\ge\tau\right]\le 1-\alpha$$
if, and only if, 
$$\dlim_{x\to 0}\mathbb{P}\left[\|T(x)\|\ge\tau\right]\le 1-\alpha,$$
and when $\alpha=1,$ remains the conclusion that $T$ is stochastically continuous.

The following result provides us with a characterization for probably continuous random operators, as well as, in particular, when $\alpha=1$, characterizes stochastically continuous operators.

\begin{theorem}\label{Theorem2.P}
	Given $0<\alpha\le1$ and a linear random operator $T:X\to \mathcal{R}_\Omega(Y)$, the following statements are equivalent.
	\begin{itemize}
		\item[i)] $T$ is probably Lipschitzian with probability $\ge\alpha$.
		\item[ii)] $T$ is probably uniformly continuous with probability $\ge\alpha$.
		\item[iii)] $T$ is probably continuous with probability $\ge\alpha$.
		\item[iv)] $T$is probably continuous at some point in $X$ with probability $\ge\alpha$.
		\item[v)] $\dlim_{x\to 0}\mathbb{P}\left[\|T(x)\|<\tau\right]\ge\alpha, \text{ for all } \tau>0,$ i.e. $T$ isl probably continuous at the origin with probability $\ge\alpha$.
		\item[vi)] $T$ is probably bound with probability $\ge\alpha$.
		\item[vii)] For every $0<\varepsilon <\alpha$ there is a constant $M_\varepsilon\ge 0$ such that 
		$$\mathbb{P}\left[\|T(x)\|\le  M_\varepsilon \|x\| \right] >\varepsilon \text{ for all }x \text{ in } X.$$
	\end{itemize}
\end{theorem}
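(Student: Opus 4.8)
The plan is to establish the cyclic chain of implications
$$(\mathrm{i})\Rightarrow(\mathrm{ii})\Rightarrow(\mathrm{iii})\Rightarrow(\mathrm{iv})\Rightarrow(\mathrm{v})\Rightarrow(\mathrm{vi})\Rightarrow(\mathrm{vii})\Rightarrow(\mathrm{i}),$$
which is the probabilistic analogue of the classical equivalence ``Lipschitz $\Leftrightarrow$ uniformly continuous $\Leftrightarrow$ continuous $\Leftrightarrow$ continuous at one point $\Leftrightarrow$ bounded'' for linear operators, but carrying along at each step the threshold $\varepsilon<\alpha$ and, crucially, the fact that the algebraic identities satisfied by $T$ hold only on events of probability $1$. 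The device used everywhere is the elementary observation that $\mathbb{P}(A)=\mathbb{P}(A\cap B)$ whenever $\mathbb{P}(B)=1$, combined with the fact that on $[T(u+v)=T(u)+T(v)]$ one has $\|T(u+v)\|=\|T(u)+T(v)\|$ (and similarly $\|T(\lambda u)\|=|\lambda|\,\|T(u)\|$ on $[T(\lambda u)=\lambda T(u)]$), both events having probability $1$ by linearity.

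For $(\mathrm{i})\Rightarrow(\mathrm{ii})$: given $\tau>0$ and $0<\varepsilon<\alpha$, take the constant $M_\varepsilon$ from (i) and set $\delta:=\tau/(1+M_\varepsilon)$; then whenever $\|x-y\|<\delta$ one has $[\|T(x)-T(y)\|\le M_\varepsilon\|x-y\|]\subseteq[\|T(x)-T(y)\|<\tau]$, so $\mathbb{P}[\|T(x)-T(y)\|<\tau]>\varepsilon$. The implications $(\mathrm{ii})\Rightarrow(\mathrm{iii})\Rightarrow(\mathrm{iv})$ are immediate from Definition~\ref{defII}: the $\delta$ given by probable uniform continuity works simultaneously at every point, and continuity everywhere is in particular continuity at one point.

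The conceptual core is $(\mathrm{iv})\Rightarrow(\mathrm{v})$. Assume $T$ is probably continuous at $x_0$ with probability $\ge\alpha$, fix $\tau>0$ and $0<\varepsilon<\alpha$, and let $\delta>0$ satisfy $\mathbb{P}[\|T(x)-T(x_0)\|<\tau]>\varepsilon$ whenever $\|x-x_0\|<\delta$. For $z\in X$ with $\|z\|<\delta$ put $x:=x_0+z$; then $\|x-x_0\|<\delta$ and, since $B:=[T(x)=T(x_0)+T(z)]$ has probability $1$ and $\|T(x)-T(x_0)\|=\|T(z)\|$ on $B$,
$$\mathbb{P}[\|T(z)\|<\tau]=\mathbb{P}[\|T(z)\|<\tau,B]=\mathbb{P}[\|T(x)-T(x_0)\|<\tau,B]=\mathbb{P}[\|T(x)-T(x_0)\|<\tau]>\varepsilon,$$
so $\dlim_{z\to0}\mathbb{P}[\|T(z)\|<\tau]\ge\alpha$ for every $\tau>0$, which is (v) (equivalently, probable continuity at the origin, by Remark~\ref{propC.O.P.}). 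The remaining implications are of the same kind. For $(\mathrm{v})\Rightarrow(\mathrm{vi})$, fix $0<\varepsilon<\alpha$, apply (v) with $\tau=1$ to get $\delta>0$ with $\mathbb{P}[\|T(z)\|<1]>\varepsilon$ for $\|z\|<\delta$, and for $x\in B_X$ set $z:=\tfrac{\delta}{2} x$, so $\|z\|\le\tfrac{\delta}{2}<\delta$; conditioning on $[T(z)=\tfrac{\delta}{2} T(x)]$ gives $\mathbb{P}[\|T(x)\|<2/\delta]>\varepsilon$, so $M_\varepsilon:=2/\delta$ works. For $(\mathrm{vi})\Rightarrow(\mathrm{vii})$, with $M_\varepsilon$ from (vi): for $x\ne0$ condition on $[T(x)=\|x\|T(x/\|x\|)]$ to get $\mathbb{P}[\|T(x)\|\le M_\varepsilon\|x\|]=\mathbb{P}[\|T(x/\|x\|)\|\le M_\varepsilon]>\varepsilon$, while for $x=0$ the event $[\|T(0)\|\le0]=[T(0)=0]$ has probability $1$. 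Finally $(\mathrm{vii})\Rightarrow(\mathrm{i})$: for $x,y\in X$ condition on $[T(x-y)=T(x)-T(y)]$ to reduce $\mathbb{P}[\|T(x)-T(y)\|\le M_\varepsilon\|x-y\|]$ to $\mathbb{P}[\|T(x-y)\|\le M_\varepsilon\|x-y\|]>\varepsilon$, and replace $M_\varepsilon$ by $M_\varepsilon+1>0$ to meet the form of Definition~\ref{defII}(3).

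The only genuine obstacle is the bookkeeping of almost-sure linearity: none of the ``translate to the origin'', ``rescale'' or ``take differences'' tricks is valid pointwise, so each algebraic identity between images of $T$ must first be treated as a probability-$1$ event and absorbed via $\mathbb{P}(A)=\mathbb{P}(A\cap B)$ before $\|T(x)-T(y)\|$ may be replaced by $\|T(x-y)\|$, and so on; once this is organized, every implication collapses to a one-line estimate. A secondary point to monitor is the strict-versus-nonstrict inequalities (the ``$<\tau$'' against ``$\le M_\varepsilon\|x-y\|$'', and the strict ``$>\varepsilon$'' with $\varepsilon<\alpha$), which is why above I shrink $\delta$ by the factor $1/(1+M_\varepsilon)$ and enlarge constants by a harmless $+1$ rather than chasing sharp values.
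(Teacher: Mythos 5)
Your proposal is correct and follows essentially the same route as the paper: the identical cyclic chain $(\mathrm{i})\Rightarrow(\mathrm{ii})\Rightarrow\cdots\Rightarrow(\mathrm{vii})\Rightarrow(\mathrm{i})$, with each step reduced to the same translation/rescaling/difference trick absorbed through a probability-one linearity event via $\mathbb{P}(A)=\mathbb{P}(A\cap B)$. Your minor refinements (taking $\delta=\tau/(1+M_\varepsilon)$ and enlarging constants by $1$ to keep them strictly positive) are harmless tidying of the same argument.
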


\begin{proof}
	
	$i) \Rightarrow ii) $ In fact, given $\tau>0$ and $0<\varepsilon<\alpha,$ and assuming that $T$ is probably Lipschitzian with probability $\ge\alpha$, that is, that for all $0<\varepsilon<\alpha$ exists $M_\varepsilon>0$ such that $\mathbb{P}\left[\|T(x)-T(y)\|\le M_\varepsilon\|x-y\|\right]>\varepsilon$ for all $x,y\in X,$ just take $\delta=\tau/M_\varepsilon.$ Indeed, if $x,y\in X$ and $\|x-y\|<\delta=\tau/M_\varepsilon,$ then
	\begin{eqnarray*}
		\mathbb{P}\left[\|T(x)-T(y)\|<\tau\right]&=&\mathbb{P}\left[\|T(x)-T(y)\|< M_\varepsilon\delta\right]\\ &\ge& \mathbb{P}\left[\|T(x)-T(y)\|\le M_\varepsilon\|x-y\|\right]>\varepsilon,
	\end{eqnarray*}
	that is, $T$ is uniformly continuous probably with probability $\ge\alpha$.
	
	$ii) \Rightarrow iii)$ Just set $y=x_0$ Just set $4$ of Definition \ref{defII}.
	
	$iii) \Rightarrow iv)$ Immediate.
	
	$iv) \Rightarrow v)$ 
	Assuming $T$ probably continuous in $x_0\in X$ with probability $\ge\alpha,$ given $\tau>0$ and $0<\varepsilon<\alpha,$ we get $\mathbb{P}\left[\|T(x)-T(x_0)\|<\tau\right]>\varepsilon$ whenever $x\in X$ and $\|x-x_0\|<\delta.$ Taking $x\in X$ such that $\|x-0\|=\|x\|<\delta,$ we have $\|(x+x_0)-x_0\|<\delta,$ hence $\mathbb{P}\left[\|T(x+x_0)-T(x_0)\|<\tau\right]>\varepsilon.$ So, since
	$$\mathbb{P}\left[\|T(x+x_0)-T(x_0)\|=\|T(x)+T(x_0)-T(x_0)\|\right]=1,$$
	we have
	\begin{eqnarray*}
		&\mathbb{P}\left[\|T(x)+T(x_0)-T(x_0)\|<\tau\right]\\
		=&\mathbb{P}\left[\|T(x)+T(x_0)-T(x_0)\|<\tau, \|T(x+x_0)-T(x_0)\|=\|T(x)+T(x_0)-T(x_0)\|\right]\\
		&=\mathbb{P}\left[\|T(x+x_0)-T(x_0)\|<\tau\right]
	\end{eqnarray*}
	by what we can write
	\begin{eqnarray*}
		\mathbb{P}\left[\|T(x)\|<\tau\right]&=&\mathbb{P}\left[\|T(x)+T(x_0)-T(x_0)\|<\tau\right]\\ &=& \mathbb{P}\left[\|T(x+x_0)-T(x_0)\|<\tau\right]>\varepsilon,
	\end{eqnarray*}
	i.e. $\dlim_{x\to 0}\mathbb{P}\left[\|T(x)\|<\delta\right]\ge\alpha, \text{ for all } \tau>0.$
	
	$v) \Rightarrow vi)$ If $T$ is probably continuous at the origin with probability $\ge\alpha$, that is, if $\dlim_{x\to 0}\mathbb{P}\left[\|T(x)\|<\tau\right]\ge\alpha, \text{ for all } \tau>0,$ so given $\tau>$ and $0<\varepsilon<\alpha,$ exists
	$\delta>0$ such that $\mathbb{P}\left[\|T(x)\|<\tau\right]>\varepsilon$ whenever $\|x\|\le\delta.$ Fixed $\tau=C>0, \delta$ will only be a function of $\varepsilon.$ As
	$$\mathbb{P}\left[\|T(\delta x/\delta)\|=\|\delta T(x/\delta)\|\right]=1,$$
	we have
	\begin{eqnarray*}
		\varepsilon<\mathbb{P}\left[\|T(x)\|<C\right]
		&=&\mathbb{P}\left[\|T(\delta x/\delta)\|<C, \|T(\delta x/\delta)\|=\|\delta T(x/\delta)\|\right]\\ 
		&=&\mathbb{P}\left[\|\delta T(x/\delta)\|<C, \|T(\delta x/\delta)\|=\|\delta T(x/\delta)\|\right]\\
		&=& \mathbb{P}\left[\|T(x/\delta)\|<C/\delta\right],
	\end{eqnarray*}
	whenever $\|x\|\le\delta$ and, in this case,  $\|x/\delta\|\le 1,$ so, writing $C/\delta=M_\varepsilon,$ we conclude that $\mathbb{P}\left[\|T(x)\|\le M_\varepsilon\right]\ge\mathbb{P}\left[\|T(x)\|<M_\varepsilon\right]>\varepsilon$ for all $x\in B_{X}.$
	
	$vi) \Rightarrow vii)$ Suppose now that for every $0<\varepsilon<\alpha,$ there exists $M_\varepsilon$ such that $\mathbb{P}\left[\|T(x)\|\le M_\varepsilon\right]>\varepsilon$ for all $x\in B_{X}.$ So for all  $x\in X\setminus\{0\},$ we have $x/\|x\|\in B_{X},$ thus, with a reasoning similar to that used in the previous item, we concluded that
	\begin{eqnarray*}
		\varepsilon<\mathbb{P}\left[\|T(x/\|x\|)\|\le M_\varepsilon\right]&=&\mathbb{P}\left[\|T(x)\|\le M_\varepsilon\|x\|\right]\text{ for all }x\in X.
	\end{eqnarray*}
	The case $x=0$ is immediate.
	
	$vii) \Rightarrow i) $ As $\mathbb{P}\left[\|T(x)-T(y)\|=\|T(x-y)\|\right]=1,$ and assuming $(vii)$, we can write
	\begin{eqnarray*}
		\mathbb{P}\left[\|T(x)-T(y)\|\le M_\varepsilon\|x-y\|\right]:=\mathbb{P}\left(\{\omega\in\Omega; \|T(x)(\omega)-T(y)(\omega)\|\le M_\varepsilon\|x-y\|\| \}\right)=\\ \mathbb{P}\left(\{\omega\in\Omega; \|T(x)(\omega)-T(y)(\omega)\|\le M_\varepsilon\|x-y\|\}, \{\omega\in\Omega; \|T(x)(\omega)-T(y)(\omega)\|=\|T(x-y)(\omega)\| \}\right)=\\ \mathbb{P}\left(\{\omega\in\Omega; \|T(x-y)\|\le M_\varepsilon\|x-y\|\}, \{\omega\in\Omega; \|T(x)(\omega)-T(y)(\omega)\|=\|T(x-y)(\omega)\| \}\right)=\\ \mathbb{P}\left[\|T(x-y)\|\le M_\varepsilon\|x-y\|\right]>\varepsilon,
	\end{eqnarray*}
	that is, for every $0<\varepsilon<\alpha,$ there is $M_\varepsilon>0$ such that
	$$\mathbb{P}\left[\|T(x)-T(y)\|\le M_\varepsilon\|x-y\|\right]>\varepsilon, \quad\forall x,y\in X.$$
	
\end{proof}

In particular, when $\alpha=1,$ the Theorem \ref{Theorem2.P} provides a characterization of stochastically continuous random operators.

\section{Randomized version of the Closed Graph Theorem}

\subsection{Conditionally continuous linear random operators}

Given a linear random operator $T:X\to \mathcal{R}_\Omega(Y)$ and a measurable subset $\Omega'$ with $\mathbb{P}\left[\Omega'\right]>0$, we can consider $\Omega'$ as a new probability space with the structure inherited from $\Omega$.  In this case, the probability induced on $\Omega'$ is the conditional probability $\mathbb P'$ given by 
$$\mathbb P'(\hat{\Omega})=\frac{\mathbb P(\hat{\Omega})}{\mathbb P(\Omega')},\quad\forall\hat{\Omega}\in\mathcal{A'}, $$
where $\mathcal{A'}$ is the $\sigma$--algebra induced in $\mathcal{A}$ by $\Omega'.$ Thus, for each random variable $y$ in $\mathcal{R}_{\Omega}(Y)$, we will consider its restriction $y/\Omega'$ and, in these terms, we present the following definition.

\begin{definition}
	Let $T(x)/\Omega'$ be the restriction of the random variable $T(x)$ to the subspace $\Omega'$, with $\mathbb P (\Omega')> 0$. The operator $T/\Omega': X\to \mathcal{R}_{\Omega'}(Y)$, given by
	$$T/\Omega'(x)=T(x)/\Omega',$$
	is named \textit{conditional operator} of $T.$  	
\end{definition}

\begin{proposition}\label{propP}
	Let $0<\alpha\le1.$ A linear random operator $T$ is \textit{probably continuous} with probability $\ge\alpha$ if and only if there is a measurable subset $\Omega'\subset\Omega$ with $\mathbb P\left(\Omega'\right)\ge\alpha$ such that the conditional operator $T/\Omega'$ is stochastically continuous (on probability $\mathbb P'$). 
\end{proposition}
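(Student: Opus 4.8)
The plan is to prove both implications using the characterization in Theorem~\ref{Theorem2.P}, exploiting the fact that ``probably continuous with probability $\ge\alpha$'' is equivalent to the quantitative bound (vii): for every $0<\varepsilon<\alpha$ there is $M_\varepsilon\ge0$ with $\mathbb P\left[\|T(x)\|\le M_\varepsilon\|x\|\right]>\varepsilon$ for all $x\in X$.

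For the ``if'' direction, suppose $\Omega'$ is measurable with $\mathbb P(\Omega')\ge\alpha$ and $T/\Omega'$ is stochastically continuous on $(\Omega',\mathcal A',\mathbb P')$. By Theorem~\ref{Theorem2.P} applied with $\alpha=1$ (to the operator $T/\Omega'$), for every $\tau>0$ and every $0<\eta<1$ there is $M>0$ with $\mathbb P'\left[\|T/\Omega'(x)\|\le M\|x\|\right]>\eta$ for all $x\in X$. I would now unwind the definition of $\mathbb P'$: the event $\left[\|T/\Omega'(x)\|\le M\|x\|\right]$ sits inside $\Omega'$, so $\mathbb P\left[\|T(x)\|\le M\|x\|\right]\ge\mathbb P\left(\left[\|T(x)\|\le M\|x\|\right]\cap\Omega'\right)=\mathbb P(\Omega')\cdot\mathbb P'\left[\|T/\Omega'(x)\|\le M\|x\|\right]>\alpha\eta$. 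Given $0<\varepsilon<\alpha$, choosing $\eta$ close enough to $1$ that $\alpha\eta>\varepsilon$ (here one uses $\mathbb P(\Omega')\ge\alpha$) yields a single $M_\varepsilon$ working for all $x$, which is exactly (vii); hence $T$ is probably continuous with probability $\ge\alpha$.

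For the ``only if'' direction, assume $T$ is probably continuous with probability $\ge\alpha$. This is the delicate part, and I expect the main obstacle to be producing a \emph{single} measurable set $\Omega'$ of probability $\ge\alpha$ that simultaneously works for \emph{all} $x$ and \emph{all} $\tau$, since (vii) only gives, for each $\varepsilon<\alpha$, a set $A_{x,\varepsilon}=\left[\|T(x)\|\le M_\varepsilon\|x\|\right]$ of probability $>\varepsilon$ depending on $x$. The natural idea is to take a countable dense subset $D\subset X$ (if $X$ is separable) or to work first on a fixed $x$, and form $\Omega'=\bigcap$ of suitable full-measure-after-conditioning events; more robustly, one can take a sequence $\varepsilon_n\uparrow\alpha$, let $M_n=M_{\varepsilon_n}$, and consider the ``good'' set $G=\{\omega:\ \|T(x)(\omega)\|\le M_n\|x\|$ eventually, for $x$ ranging over a fixed countable set$\}$; the linearity relations (which hold off a null set for each pair $x,y$) let one pass from a countable dense set to all of $X$. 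One then shows $\mathbb P(G)\ge\alpha$ by a monotone/limiting argument on the $\varepsilon_n$, sets $\Omega'=G$, and checks that on $\Omega'$ the conditional operator satisfies the bound $\|T/\Omega'(x)\|\le M\|x\|$ with $\mathbb P'$-probability arbitrarily close to $1$, i.e.\ (vii) with $\alpha=1$, so that $T/\Omega'$ is stochastically continuous by Theorem~\ref{Theorem2.P}.

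Thus the skeleton is: (a) restate both stochastic and probable continuity via the multiplicative bound of Theorem~\ref{Theorem2.P}(vii); (b) for the easy direction push a $\mathbb P'$-bound down to a $\mathbb P$-bound by multiplying by $\mathbb P(\Omega')\ge\alpha$ and absorbing the loss into the freedom in $\varepsilon$; (c) for the hard direction build $\Omega'$ as a countable intersection of conditioning-good events along a sequence $\varepsilon_n\uparrow\alpha$ and a countable dense set in $X$, verify $\mathbb P(\Omega')\ge\alpha$ by continuity of measure, extend the estimate from the dense set to all of $X$ using the almost-sure linearity relations, and conclude via Theorem~\ref{Theorem2.P} with $\alpha=1$. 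If $X$ is not assumed separable, the same argument runs through for each fixed finite-dimensional subspace, or one restricts attention to the linear span of a single $x$ and uses that (vii) is already a statement uniform in the scalar multiples of $x$; the write-up should flag whichever hypothesis on $X$ is in force.
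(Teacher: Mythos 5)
Your ``if'' direction is correct and is essentially the paper's own argument: pass from $\mathbb P'$ to $\mathbb P$ via $\mathbb P\left[\|T(x)\|\le M\|x\|\right]\ge\mathbb P(\Omega')\,\mathbb P'\left[\|T/\Omega'(x)\|\le M\|x\|\right]$, use $\mathbb P(\Omega')\ge\alpha$, and absorb the loss by taking $\eta$ close to $1$. No issues there.

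The ``only if'' direction is where the real content lies, and your proposal does not close it. You correctly isolate the obstacle --- producing one set $\Omega'$ with $\mathbb P(\Omega')\ge\alpha$ that works simultaneously for all $x$ --- but the construction you sketch fails at exactly the step you wave at. For a fixed $x$, taking $M_n=M_{\varepsilon_n}$ increasing, the sets $G_x=\bigcup_n\left[\|T(x)\|\le M_n\|x\|\right]$ do satisfy $\mathbb P(G_x)\ge\alpha$ by continuity from below; that part is fine. But you then need $\mathbb P\bigl(\bigcap_{x\in D}G_x\bigr)\ge\alpha$ for a countable dense $D$, and no ``monotone/limiting argument'' gives this: countably many events each of probability $\ge\alpha$ can have intersection of probability far below $\alpha$ (even zero) when $\alpha<1$, since the exceptional sets $\Omega\setminus G_x$, each of measure $\le 1-\alpha$, may be essentially disjoint as $x$ varies. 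The monotonicity you have runs only in the index $n$ for each fixed $x$; there is no nesting across different $x$. (The argument does work when $\alpha=1$, because a countable intersection of full-measure sets has full measure --- which is precisely why the stochastic case is easy and the probable case is not.) A secondary problem: even on a single $G_x$ you get $\|T(x)(\omega)\|\le M_{n(\omega)}\|x\|$ for an $\omega$-dependent index, not a uniform constant, so the conditional bound (vii) with $\alpha=1$ does not follow directly. You should know that the paper's own proof founders on the same rock: it fixes $x$, sets $\Omega'=\bigcup_k\Omega_x^{\varepsilon_k}$ (a set depending on $x$), and then asserts both that $\mathbb P\left[\|T/\Omega'(x)\|\le M_\varepsilon\|x\|\right]=\mathbb P(\Omega')$ and that $T/\Omega'$ is stochastically continuous; neither assertion is justified. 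So the missing idea is genuinely missing: as written, neither your sketch nor the paper establishes the forward implication, and any correct proof must explain how to pass from the $x$-dependent good sets to a single $\Omega'$ without losing measure.
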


\begin{proof}
		
	Assuming that $T$ is probably continuous with probability $\ge\alpha,$ that is, that 
	$$\mathbb{P}\left[\|T(x)\|\le  M_\varepsilon \|x\| \right] >\varepsilon,$$ 
	for all $0<\varepsilon<\alpha$ and $x\in X,$ let us consider a sequence $(\varepsilon_n),~0<\varepsilon_n<\alpha$ such that $\varepsilon_n\to \alpha$ and, for each $n\in\mathbb N$ and $x\in X,$ let's define 
	$$\Omega_x^{\varepsilon_n}=\left[\|T(x)\|\le  M_{\varepsilon_n} \|x\| \right],$$
	that is, $\Omega_x^{\varepsilon_n}=\{\omega\in\Omega;\|T(x)(\omega)\|\le  M_{\varepsilon_n} \|x\| \}.$ So, by hypothesis, we have $\mathbb P \left(\Omega_x^{\varepsilon_n}\right)>\varepsilon_n$ for all $x\in X.$ Also writing
	$$\Omega_x^{n}=\bigcup_{k=1}^n\Omega_x^{\varepsilon_k}$$
	we get, for all $x\in X$ and $n\in\mathbb N,$
	$$\mathbb P\left(\Omega_x^{n}\right)>\varepsilon_n \text{ and } \Omega_x^{n}\subset\Omega_x^{n+1}$$ 
	so, for each $x\in X,$ a new event is defined, denoted by $\dlim_{n\to \infty}\Omega_x^{n}$, to know
	$$\dlim_{n\to \infty}\Omega_x^{n}=\bigcup_{k=1}^{\infty}\Omega_x^{k}$$
	who has the property
	$$\mathbb P\left(\bigcup_{k=1}^{\infty}\Omega_x^{k}\right):=\mathbb P \left(\dlim_{n\to \infty}\Omega_x^{n}\right)=\dlim_{n\to \infty}\mathbb P\left(\Omega_x^{n}\right)\ge\dlim_{n\to \infty}\varepsilon_n=\alpha.$$
	So, fixed $x\in X,$ just take $\Omega'=\bigcup_{k=1}^{\infty}\Omega_x^{k}$ and we will have 
	
	$$\mathbb{P}\left[\|T/\Omega'(x)\|\le  M_\varepsilon \|x\| \right] =\mathbb P \left[\Omega'\right]>\mathbb P \left[\Omega'\right]\varepsilon,\quad\forall~ 0<\varepsilon<1, $$
	or yet,
	$$\mathbb{P}'\left[T/\Omega'\right]:=\frac{\mathbb{P}\left[\|T(x)\|\le  M_\varepsilon \|x\| \right]}{\mathbb P \left[\Omega'\right]} >\varepsilon,\quad\forall~ 0<\varepsilon<1,$$
	i.e. there exists a measurable subset $\Omega'\subset\Omega$ with $\mathbb P\left(\Omega'\right)\ge\alpha$ such that the conditional operator $T/\Omega'$ is stochastically continuous in the conditional probability $\mathbb P'$. 
	
	Conversely, in view of item $(vii)$ of Theorem \ref{Theorem2.P},  to say that $T/\Omega'$ is stochastically continuous in the conditional probability $\mathbb P',$ means to say that for every $0<\varepsilon <1$ there is a constant $M_\varepsilon\ge 0$ such that, $ \text{for all }x \text{ in } X,$
	
	\begin{eqnarray*}
		\mathbb{P}'\left[\|T/\Omega'(x)\|\le  M_\varepsilon \|x\| \right] >\varepsilon 
		&\Leftrightarrow& \frac{\mathbb{P}\left[\|T(x)/\Omega'\|\le  M_\varepsilon \|x\| \right]}{\mathbb{P}(\Omega')} >\varepsilon\\	
		&\Leftrightarrow& \mathbb{P}\left[\|T(x)/\Omega'\|\le  M_\varepsilon \|x\| \right] >\mathbb P (\Omega') \varepsilon.
	\end{eqnarray*}
	Since
	$$\left[\|T(x)/\Omega'\|\le M_\varepsilon\|x\|\right]\subset\left[\|T(x)\|\le M_\varepsilon\|x\|\right],$$
	we have
	$$\mathbb{P}\left[\|T(x)/\Omega'\|\le  M_\varepsilon \|x\| \right] >\mathbb P (\Omega') \varepsilon 
	\Rightarrow \mathbb{P}\left[\|T(x)\|\le  M_\varepsilon \|x\| \right] >\mathbb{P}(\Omega')\varepsilon,$$
	hence
	$$\mathbb{P}'\left[\|T/\Omega'(x)\|\le  M_\varepsilon \|x\| \right] >\varepsilon 
	\Rightarrow \mathbb{P}\left[\|T(x)\|\le  M_\varepsilon \|x\| \right] >\mathbb{P}(\Omega')\varepsilon\ge\alpha\varepsilon.$$
	As $0<\alpha\varepsilon<\alpha$ for all $0<\varepsilon<1,$ it follows that $T$ is probably continuous with probability $\ge\alpha.$
\end{proof}

\begin{definition}
	For a probably continuous linear random operator $T:X\to\mathcal{R}_\Omega(Y),$ we define the real number
	$$\alpha_T=\sup\{\alpha;\text{ if } 0<\varepsilon<\alpha, \exists ~ M_{\varepsilon}\ge 0 \text{ such that } \mathbb{P}\left[\|T(x)\|\le  M_{\varepsilon} \|x\| \right] >\varepsilon~ \forall ~x\in X\}.$$ 
	When $T$ is probably not continuous, we define $\alpha_T=0.$ We call $\alpha_T$ \textit{probability of $T$ being continuous}.
\end{definition} 

Note that when T is stochastically continuous, we have $\alpha_T=1.$ On the other hand, Corollary \ref{alpha} below shows us, among other things, that if $\alpha_T=1,$ $T$ is stochastically continuous. This same result also justifies the fact that we call $\alpha_T$ \textit{probability of $T$ being continuous}.

Next, the sets $\Omega'$ and $\Omega'_0$ represent certain measurable subsets of $\Omega,$ as explained in the result below:

\begin{corollary}\label{alpha}
	Let $T:X\to\mathcal{R}_\Omega(Y)$ be a linear probably continuous random operator. Then
	\begin{eqnarray*}
		\alpha_T&=&\max\{\alpha;\text{ if } 0<\varepsilon<\alpha, \exists ~ M_{\varepsilon}\ge 0 \text{ such that } \mathbb{P}\left[\|T(x)\|\le  M_{\varepsilon} \|x\| \right] >\varepsilon~ \forall ~x\in X\}\\
		&=&\max\{ \mathbb{P}\left[\Omega'\right]: T/\Omega' \text{ is stochastically continuous in probability } \mathbb{P'} \}\\
		&:=&\mathbb{P}\left[\Omega_0'\right].
	\end{eqnarray*}
	In particular, $T$ is probably continuous with probability equal to $\alpha_T,$ $T$ has a conditional operator $T/\Omega_0'$ stochastically continuous (in conditional probability) with $\mathbb P \left[\Omega_0'\right]=\alpha_T,$ and it is not possible to find a conditional operator $T/\Omega'$ stochastically continuous with $\mathbb P \left[\Omega'\right]>\alpha_T.$
\end{corollary}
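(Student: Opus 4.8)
The plan is to show that the supremum defining $\alpha_T$ is attained, by directly constructing a single measurable set $\Omega_0'$ of probability $\ge\alpha_T$ on which $T$ restricts to a stochastically continuous operator. First I would pick a sequence $\varepsilon_n\uparrow\alpha_T$ with $0<\varepsilon_n<\alpha_T$, and using item $(vii)$ of Theorem \ref{Theorem2.P} (which applies since $T$ is probably continuous, hence the sup is over a nonempty set), obtain for each $n$ a constant $M_{\varepsilon_n}\ge 0$ with $\mathbb{P}\left[\|T(x)\|\le M_{\varepsilon_n}\|x\|\right]>\varepsilon_n$ for all $x\in X$. The key difference from Proposition \ref{propP} is that there the set $\Omega'$ was allowed to depend on a fixed $x$; here I must produce one set working simultaneously for all $x\in X$, so I would need a \emph{uniform} choice. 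The natural candidate is to take, for a fixed sequence $(x_k)$ that is dense in $X$ (or, if $X$ is not separable, to argue more carefully — see below), the set $\Omega_0'=\bigcap_k\bigcup_n\Omega_{x_k}^{\varepsilon_n}$ or a suitable diagonal variant, and then verify that $T/\Omega_0'$ satisfies item $(vii)$ of Theorem \ref{Theorem2.P} with respect to $\mathbb{P}'$ for every $0<\varepsilon<1$.

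The second step is to establish the chain of equalities in the statement. The first equality, that the $\sup$ is a $\max$, follows once $\Omega_0'$ is constructed: Proposition \ref{propP} gives that $\mathbb{P}[\Omega_0']\ge\alpha_T$ forces $T$ to be probably continuous with probability $\ge\mathbb{P}[\Omega_0']$, so $\alpha_T\ge\mathbb{P}[\Omega_0']\ge\alpha_T$, whence equality and attainment. For the second equality I would argue both inequalities: if $T/\Omega'$ is stochastically continuous then by the converse direction of Proposition \ref{propP} we get $\alpha_T\ge\mathbb{P}[\Omega']$, so $\alpha_T\ge\max\{\mathbb{P}[\Omega']:\dots\}$; conversely $\Omega_0'$ itself realizes the value $\alpha_T$ on the right-hand side, giving the reverse inequality and simultaneously showing the right-hand $\max$ is attained at $\Omega_0'$. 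The ``in particular'' assertions are then immediate rephrasings: $T$ probably continuous with probability $=\alpha_T$ is the first equality, the existence of the stochastically continuous $T/\Omega_0'$ with $\mathbb{P}[\Omega_0']=\alpha_T$ is the construction, and the impossibility of $\mathbb{P}[\Omega']>\alpha_T$ with $T/\Omega'$ stochastically continuous is exactly the $\le$ half of the second equality.

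The main obstacle I expect is the uniform-in-$x$ construction of $\Omega_0'$. The argument in Proposition \ref{propP} crucially fixes $x$, and a countable intersection over a dense set $(x_k)$ only directly controls $\|T(x_k)(\omega)\|$ at those points; to pass to arbitrary $x\in X$ one wants continuity of $\omega\mapsto T(x)(\omega)$ in $x$, which is not available pointwise — indeed $T(x)$ is only defined up to a.e.\ equivalence and the exceptional null sets can vary with $x$. One clean way around this is to work with the conditional probability and item $(v)$ or $(vii)$ of Theorem \ref{Theorem2.P}: note that once we have \emph{any} $\Omega'$ with $\mathbb{P}[\Omega']>\varepsilon$ on which the bound $\|T(x)\|\le M_\varepsilon\|x\|$ holds for all $x$ (not just one $x$), we are done; and such a set exists because $\left[\|T(x)\|\le M_\varepsilon\|x\|\right]$ has probability $>\varepsilon$ for every $x$, but we need the \emph{intersection over all $x$} to still be large. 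Here I would invoke that for a linear random operator the event $\bigcap_{x\in X}\left[\|T(x)\|\le M_\varepsilon\|x\|\right]$ differs from $\bigcap_{k}\left[\|T(x_k)\|\le M_\varepsilon\|x_k\|\right]$ only by a null set when $(x_k)$ is dense and $X$ separable, using the a.s.\ linearity to get the required a.s.\ Lipschitz-type continuity on a co-null set; combined with $\varepsilon_n\uparrow\alpha_T$ and a countable union over $n$, this yields $\Omega_0'$ with $\mathbb{P}[\Omega_0']\ge\alpha_T$. If separability of $X$ is not assumed, I would either add it as a hypothesis or restrict attention to a separable subspace, and flag this as the delicate point of the proof.
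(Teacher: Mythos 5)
Your overall skeleton --- first equality by showing the supremum is attained, second equality by running Proposition \ref{propP} in both directions, and the ``in particular'' clauses as rephrasings --- matches the paper's proof, but you route everything through an explicit construction of $\Omega_0'$, which the paper never attempts. For the first equality the paper argues elementarily: the defining condition ``for all $0<\varepsilon<\alpha$ there is $M_\varepsilon$ with $\mathbb{P}\left[\|T(x)\|\le M_\varepsilon\|x\|\right]>\varepsilon$ for all $x$'' is monotone in $\alpha$, so given any $\varepsilon<\alpha_T$ one picks $\alpha_0$ in the defining set with $\varepsilon<\alpha_0<\alpha_T$, and the condition at $\alpha_0$ already supplies the required $M_\varepsilon$; hence $\alpha_T$ itself belongs to the set and the supremum is a maximum. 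No set $\Omega_0'$ is needed for that step. For the second equality the paper simply invokes the forward direction of Proposition \ref{propP} with $\alpha=\alpha_T$ to obtain some $\Omega_0'$ with $\mathbb{P}\left[\Omega_0'\right]\ge\alpha_T$, and the converse direction to rule out $\mathbb{P}\left[\Omega'\right]>\alpha_T$.

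The genuine gap in your proposal is the construction of $\Omega_0'$. A countable intersection $\bigcap_k\bigcup_n\Omega_{x_k}^{\varepsilon_n}$ of sets each of probability $\ge\alpha_T$ can have probability far below $\alpha_T$ (even zero), so your candidate set is not shown to be large; and the claimed identification of $\bigcap_{x\in X}\left[\|T(x)\|\le M_\varepsilon\|x\|\right]$ with $\bigcap_{k}\left[\|T(x_k)\|\le M_\varepsilon\|x_k\|\right]$ up to a null set is circular: a.s.\ linearity controls $T(x)-T(x_k)-T(x-x_k)$ only up to a null set depending on the pair $(x,x_k)$, and the pointwise bound $\|T(x-x_k)(\omega)\|\le M_\varepsilon\|x-x_k\|$ on a fixed co-null set is precisely the uniform Lipschitz statement you are trying to establish. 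Your instinct that the proof of Proposition \ref{propP} only produces an $\Omega'$ depending on the fixed $x$ is correct, and it is a real weakness of that proposition's proof; but the paper's proof of the present corollary uses the proposition as a black box, and if you do the same your argument closes. As written, your attempt to repair the uniformity issue inside this corollary does not succeed, so the key existence claim for $\Omega_0'$ remains unproved in your write-up.
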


\begin{proof}
	
	To show that 
	$$\alpha_T=\max\{\alpha;\text{ if } 0<\varepsilon<\alpha, \exists ~ M_{\varepsilon}\ge 0 \text{ such that } \mathbb{P}\left[\|T(x)\|\le  M_{\varepsilon} \|x\| \right] >\varepsilon~ \forall ~x\in X\},$$
	we need to show that $\mathbb{P}\left[\|T(x)\|\le  M_{\varepsilon} \|x\| \right] >\varepsilon$ for all $0<\varepsilon<\alpha_T$ and $x\in X.$ In fact, assuming the opposite, there is $\varepsilon_0,~0<\varepsilon_0<\alpha_T$ such that, for every constant $M\ge 0,$ we can find $x=x(M)$ so that
	\begin{eqnarray}\label{ep0}
	\mathbb{P}\left[\|T(x)\|\le  M \|x\| \right] \le\varepsilon_0.
	\end{eqnarray}
	On the other hand, $\alpha_T$ being the supreme of the set 
	$$\{\alpha;\text{ if } 0<\varepsilon<\alpha, \exists ~ M_{\varepsilon}\ge 0 \text{ such that } \mathbb{P}\left[\|T(x)\|\le  M_{\varepsilon} \|x\| \right] >\varepsilon~ \forall ~x\in X\},$$
	there is $\alpha_0,~\varepsilon_0<\alpha_0<\alpha_T,$ such that, for all $x\in X$ and $0<\varepsilon<\alpha_0,$ is valid
	$$\mathbb{P}\left[\|T(x)\|\le  M_{\varepsilon} \|x\| \right] >\varepsilon,$$
	which contradicts (\ref{ep0}) when $\varepsilon=\varepsilon_0.$ It follows in particular that $T$ is probably continuous with probability $\ge\alpha_T$ and $\alpha_T$ being the largest (the supreme) of the $\alpha$ for which $\mathbb{P}\left[\|T(x)\|\le  M_{\varepsilon} \|x\| \right] >\varepsilon$ for all $x\in X$ and $0<\varepsilon<\alpha,$ it follows that T has probability equal to $\alpha_T.$
	
	Let us now prove tha 
	$$\alpha_T=\max\{ \mathbb{P}\left[\Omega'\right]: T/\Omega' \text{ is stochastically continuous in probability } \mathbb{P'} \}.$$
	Since $T$ is probably continuous with probability  $=\alpha_T,$ it follows from Proposition \ref{propP} that there exists a measurable set $\Omega'_0\subset\Omega$ such that $\mathbb P \left[\Omega'_0\right]\ge\alpha_T.$ However, if it were $\mathbb P \left[\Omega'_0\right]>\alpha_T,$ there would be $\beta$ such that $\mathbb P \left[\Omega'_0\right]\ge\beta>\alpha_T$ and, by the same proposition, we would conclude that $T$ is probably continuous with probability $\ge\beta>\alpha_T,$ contradicting the first part of this corollary.
\end{proof}

\begin{corollary}\label{CorollaryE}
	For a linear random operator $T:X\to \mathcal{R}_\Omega(Y)Y$, the following statements are equivalent.
	\begin{itemize}
		\item[i)] $T$ is probably Lipschitzian with probability $\alpha_T$.
		\item[ii)] $T$ is probably uniformly continuous with probability $\alpha_T$.
		\item[iii)] $T$ is probably continuous with probability  $\alpha_T$.
		\item[iv)] $T$ is probably continuous at some point in $X$ with probability $\alpha_T$.
		\item[v)] $\dlim_{x\to 0}\mathbb{P}\left[\|T(x)\|<\tau\right]=\alpha_T, \text{ for all } \tau>0,$ i.e. $T$ is probably continuous at the origin with probability  $\alpha_T$.
		\item[vi)] $T$ is probably bound with probability $\alpha_T$.
		\item[vii)] For every $0<\varepsilon <\alpha_T$ there is a constant $M_\varepsilon\ge 0$ such that 
		$$\mathbb{P}\left[\|T(x)\|\le  M_\varepsilon \|x\| \right] >\varepsilon \text{ for all }x \text{ in } X.$$
	\end{itemize}
\end{corollary}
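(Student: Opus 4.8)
The plan is to reduce Corollary~\ref{CorollaryE} to the already-proven Theorem~\ref{Theorem2.P} by exploiting the fact, established in Corollary~\ref{alpha}, that the supremum defining $\alpha_T$ is actually a maximum. Concretely, the first step is to observe that by Corollary~\ref{alpha} statement (vii) of the present corollary holds: for every $0<\varepsilon<\alpha_T$ there is $M_\varepsilon\ge 0$ with $\mathbb{P}\left[\|T(x)\|\le M_\varepsilon\|x\|\right]>\varepsilon$ for all $x\in X$. This is exactly condition (vii) of Theorem~\ref{Theorem2.P} with the parameter $\alpha$ specialized to $\alpha_T$. So in the chain of equivalences of Theorem~\ref{Theorem2.P}, take $\alpha=\alpha_T$; this immediately yields that items (i)--(vii) here, each of which is the corresponding item of Theorem~\ref{Theorem2.P} read with $\alpha=\alpha_T$, are mutually equivalent.

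The only subtlety is that the present corollary phrases items (iii), (v) in terms of ``probability $=\alpha_T$'' (probability \emph{equal} to $\alpha_T$) rather than ``probability $\ge\alpha_T$'', and similarly for (i), (ii), (iv), (vi). So the second step is to upgrade ``$\ge\alpha_T$'' to ``$=\alpha_T$''. For this I would invoke the definition of $\alpha_T$ as a supremum: if, say, $T$ were probably continuous with probability $\ge\beta$ for some $\beta>\alpha_T$, then by item (vii) of Theorem~\ref{Theorem2.P} (applied with $\alpha=\beta$) there would be, for every $0<\varepsilon<\beta$, a constant $M_\varepsilon$ with $\mathbb{P}\left[\|T(x)\|\le M_\varepsilon\|x\|\right]>\varepsilon$ for all $x$, which places $\beta$ in the set whose supremum is $\alpha_T$, contradicting $\beta>\alpha_T$. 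Hence the largest $\alpha$ for which any (equivalently, each) of the conditions holds is exactly $\alpha_T$, i.e. the probability \emph{is} $\alpha_T$ and not merely $\ge\alpha_T$. The discussion following the definition of $\alpha_T$ (``$\alpha_T$ being the largest $\alpha$ for which \ldots'') and the first part of Corollary~\ref{alpha} already record this, so I would simply cite it.

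Putting these together: cite Corollary~\ref{alpha} to get (vii) outright; cite Theorem~\ref{Theorem2.P} with $\alpha=\alpha_T$ to get the equivalence of (i)--(vii) at level ``$\ge\alpha_T$''; and cite the maximality part of Corollary~\ref{alpha} to sharpen ``$\ge\alpha_T$'' to ``$=\alpha_T$'' in those items that are stated with equality. I do not expect any real obstacle here; the one place to be careful is the bookkeeping between the ``$>\varepsilon$ for all $\varepsilon<\alpha$'' formulation and the ``$=\alpha$'' formulation, making sure that the definitions of ``probably continuous with probability $\alpha$'', ``probably Lipschitzian with probability $\alpha$'', etc., given in Definition~\ref{defII} and the paragraph after it, all carry the same quantifier structure so that Theorem~\ref{Theorem2.P}'s equivalences transfer verbatim with $\alpha$ replaced by $\alpha_T$. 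Once that is checked the proof is essentially one line of deduction from the preceding two results.
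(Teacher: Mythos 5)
Your proposal is correct and is exactly the argument the paper intends: the paper in fact gives no written proof of Corollary~\ref{CorollaryE}, treating it as an immediate consequence of Theorem~\ref{Theorem2.P} applied with $\alpha=\alpha_T$ together with Corollary~\ref{alpha} (which supplies item (vii) and the maximality needed to upgrade ``$\ge\alpha_T$'' to ``$=\alpha_T$''), and that is precisely the reduction you carry out. The only point worth flagging is that Corollary~\ref{alpha} is stated for \emph{probably continuous} $T$, so obtaining (vii) outright from it implicitly restricts to that case (when $T$ is not probably continuous one has $\alpha_T=0$ and all seven items degenerate into vacuous statements); this is a looseness in the paper's own formulation rather than a gap in your argument.
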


With the characterization given by the Theorem \ref{Theorem2.P}, as well as the Corollary \ref{CorollaryE}, we next prove that the notion of probable continuity (in particular, stochastic continuity) of a linear random operator coincides with the notion of probable continuity (respectively, stochastic continuity) sequential.

\begin{definition}
	We will say that a linear random operator $T:X\to \mathcal{R}_{\Omega}(Y)$ is probably sequentially continuous, with probability $\ge\alpha,$ when
	$$\dlim_{n\to\infty}\mathbb{P}\left[\|T(x_n)\|<\tau\right]\ge\alpha, $$
	for any $\tau>0$ and every sequence $(x_n)$ in $X$ such that $x_n\to 0.$ Precisely, this means that, given arbitrarily $\tau>0, 0<\varepsilon<\alpha$ and $(x_n)$ in $X$ such that $x_n\to 0$ when $n\to\infty,$ exists $n_0\in \mathbb{N}$ such that 
	$$\mathbb{P}\left[\|T(x_n)\|<\tau\right]>\varepsilon \text{ for all }n>n_0.$$
\end{definition}
If $\alpha$ is the largest number for which inequality in the above limit holds, that is, if given any $\gamma>\alpha$ and $n_0\in\mathbb N,$ there are $\tau>0, n>n_0$ and $\alpha\le\varepsilon<\gamma$ such that
$$\mathbb{P}\left[\|T(x_n)\|<\tau\right]\le\varepsilon,$$
we will say that $T$ is probably sequentially continuous with probability $\alpha.$ In particular, when $\alpha=1,$ we will say that $T$ is stochastically sequentially continuous, or probably sequentially continuous with probability $1.$

The following corollary shows us that a linear random operator is probably continuous, with probability $\ge\alpha$ if and only if it is probably sequentially continuous, with probability  $\ge\alpha$ and that, in addition, equality holds when $\alpha=\alpha_T$.

\begin{corollary}\label{eqseq1}
	Let $T:X\to \mathcal{R}_{\Omega}(Y)$ be a linear random operator. Then 
	$$\dlim_{x\to 0}\mathbb{P}\left[\|T(x)\|<\tau\right]\ge\alpha$$
	for all $\tau>0$
	if and only if $$\dlim_{n\to\infty}\mathbb{P}\left[\|T(x_n)\|<\tau\right]\ge\alpha$$
	for every $\tau>0$ and every sequence $(x_n)$ in $X$ such that $x_n\to 0.$ Furthermore, if $\alpha=\alpha_T,$ holds equality in both limits of the equivalence.
\end{corollary}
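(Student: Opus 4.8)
The plan is to unwind both sides into their $\varepsilon$--$\delta$ forms and transfer between them by the usual ``take $\delta=1/n$'' device, and then dispatch the sharp case $\alpha=\alpha_T$ by invoking Corollary~\ref{alpha}. Recall that, by the reformulation of item $(1)$ of Definition~\ref{defII} together with Remark~\ref{propC.O.P.}, the hypothesis $\dlim_{x\to 0}\mathbb{P}\left[\|T(x)\|<\tau\right]\ge\alpha$ for all $\tau>0$ means precisely that for every $\tau>0$ and every $0<\varepsilon<\alpha$ there is $\delta>0$ with $\mathbb{P}\left[\|T(x)\|<\tau\right]>\varepsilon$ whenever $\|x\|<\delta$; likewise the sequential statement has the $\varepsilon$--$\delta$ form recorded right after its definition. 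For the forward implication I would fix $\tau>0$, a sequence $(x_n)$ in $X$ with $x_n\to 0$, and $0<\varepsilon<\alpha$; choosing $\delta>0$ as above and then $n_0$ with $\|x_n\|<\delta$ for all $n>n_0$ gives $\mathbb{P}\left[\|T(x_n)\|<\tau\right]>\varepsilon$ for all $n>n_0$, which is exactly $\dlim_{n\to\infty}\mathbb{P}\left[\|T(x_n)\|<\tau\right]\ge\alpha$.

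For the converse I would argue by contraposition. If $T$ is not probably continuous at the origin with probability $\ge\alpha$, there are $\tau>0$ and $0<\varepsilon<\alpha$ such that for every $\delta>0$ some $x\in X$ with $\|x\|<\delta$ satisfies $\mathbb{P}\left[\|T(x)\|<\tau\right]\le\varepsilon$. Taking $\delta=1/n$ produces a sequence $(x_n)$ with $x_n\to 0$ and $\mathbb{P}\left[\|T(x_n)\|<\tau\right]\le\varepsilon$ for every $n$, so $\dlim_{n\to\infty}\mathbb{P}\left[\|T(x_n)\|<\tau\right]\ge\alpha$ fails for this particular sequence, and hence $T$ is not probably sequentially continuous with probability $\ge\alpha$. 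Combining the two implications proves the equivalence.

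It remains to treat the case $\alpha=\alpha_T$. By Corollary~\ref{alpha} the supremum defining $\alpha_T$ is attained, so $T$ is probably continuous at the origin with probability $\ge\alpha_T$ (equality of the first limit being item $(v)$ of Corollary~\ref{CorollaryE}), and the equivalence just proved upgrades this to probable sequential continuity with probability $\ge\alpha_T$. To see that $\alpha_T$ is the largest admissible value on the sequential side as well, I would suppose $\dlim_{n\to\infty}\mathbb{P}\left[\|T(x_n)\|<\tau\right]\ge\gamma$ held for every $\tau>0$ and every null sequence, with some $\gamma>\alpha_T$; the equivalence would then force $\dlim_{x\to 0}\mathbb{P}\left[\|T(x)\|<\tau\right]\ge\gamma$, i.e. by Theorem~\ref{Theorem2.P} that $\gamma$ belongs to the set whose supremum is $\alpha_T$, contradicting $\gamma>\alpha_T$; hence equality holds in the sequential limit too. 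The only delicate point is the bookkeeping of the nested quantifiers (``for all $\tau$, for all $0<\varepsilon<\alpha$'' versus ``for all null sequences'') when passing between the two notions of limit; I do not expect any genuine analytic obstacle, since the linearity of $T$ has already been fully exploited in Theorem~\ref{Theorem2.P} and Corollary~\ref{alpha}.
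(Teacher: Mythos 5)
Your proof is correct and follows essentially the same route as the paper: the forward direction by the standard $\varepsilon$--$\delta$ transfer, the converse by contraposition with $\delta=1/n$, and the case $\alpha=\alpha_T$ by contradiction with the maximality of $\alpha_T$ via item $(v)$ of Corollary~\ref{CorollaryE}. One small point in your favour: in the final step you correctly negate the sequential statement over \emph{all} null sequences, whereas the paper's own proof phrases what must be excluded as $\dlim_{n\to\infty}\mathbb{P}\left[\|T(x_n)\|<\tau\right]>\alpha_T$ for \emph{some} null sequence --- a condition that, read literally, is already met by the constant sequence $x_n=0$ --- so your quantification is the one that actually makes the argument go through.
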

\begin{proof}
	
	Suppose $\dlim_{x\to 0}\mathbb{P}\left[\|T(x)\|<\tau\right]\ge\alpha$ for all $\tau>0,$ that is, that given $\tau>0$ and $0<\varepsilon<\alpha,$ exists $\delta>0$ such that $\mathbb{P}\left[\|T(x)\|<\tau\right]>\varepsilon$ whenever $\|x\|<\delta.$ Let $(x_n)$ be a sequence in $X$ such that $x_n\to 0$ when $n\to\infty.$ So there is $n_0\in\mathbb{N}$ such that $\|x_n\|<\delta$ for all $n>n_0,$ hence $\mathbb{P}\left[\|T(x_n)\|<\tau\right]>\varepsilon$ for all $n>n_0,$ i.e. $T$ is probably sequentially continuous, with probability $\ge\alpha$.
	
	To prove the converse, suppose that $T$ is probably not continuous with probability $\ge\alpha$, that is, that there are $\tau>0$ and $0<\varepsilon<\alpha$ such that, for all $\delta>0,$ exists $x\in X$ such that $\|x\|<\delta$ and $\mathbb{P}\left[\|T(x)\|<\tau\right]\le\varepsilon.$ Thus, for each $\delta=\frac{1}{n},$ we can choose  $x_n\in X$ such that $\|x_n\|<\frac{1}{n}$ and $\mathbb{P}\left[\|T(x_n)\|<\tau\right]\le\varepsilon,$ so $T$ is probably not sequentially continuous, with probability $\ge\alpha$.
	
	In particular, if $\alpha=\alpha_T,$ follows from item $(v)$ of Corollary \ref{CorollaryE} that
	$$\dlim_{x\to 0}\mathbb{P}\left[\|T(x)\|<\tau\right]=\alpha_T$$
	for all $\tau>0.$ In this case, it remains to show that $\dlim_{n\to\infty}\mathbb{P}\left[\|T(x_n)\|<\tau\right]>\alpha_T$ for some $\tau>0$ and some sequence $(x_n)$ in $X$ such that $x_n\to 0.$ In fact, if that were possible, we would take $\beta$ such that $\dlim_{n\to\infty}\mathbb{P}\left[\|T(x_n)\|<\tau\right]\ge\beta>\alpha_T,$ and the first part of the theorem would give us $\dlim_{x\to 0}\mathbb{P}\left[\|T(x)\|<\tau\right]\ge\beta>\alpha_T$ for all $\tau>0,$ a contradiction.
\end{proof}

\begin{corollary}\label{CorExi}
	Let $T:X\to \mathcal{R}_{\Omega}(Y)$ be a linear random operator. If there is a sequence $(x_n)$ in $X$ with $x_n\to 0$ such that 
    $$\dlim_{n\to\infty}\mathbb{P}\left[\|T(x_n)\|<\tau\right]\ge\alpha$$
	for all $\tau>0,$ then 
	$$\dlim_{x\to 0}\mathbb{P}\left[\|T(x)\|<\tau\right]\ge\alpha$$
	for all $\tau>0.$ Furthermore, if $\alpha=\alpha_T,$ holds equality on both limits.
\end{corollary}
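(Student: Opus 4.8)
The plan is to prove the first assertion by contraposition, extracting from a hypothetical failure of the conclusion a sequence that must then be incompatible with the one we are handed, and to read off the equality case from Corollary \ref{CorollaryE} together with the maximality of $\alpha_T$ recorded in Corollary \ref{alpha}. Two elementary facts will be used throughout. The first is the scaling identity: since $T$ is linear, $\mathbb{P}\left[\|T(\lambda x)\|=|\lambda|\,\|T(x)\|\right]=1$ for every scalar $\lambda$ and every $x\in X$, hence for $\lambda\neq0$
\begin{equation*}
\mathbb{P}\left[\|T(\lambda x)\|<\tau\right]=\mathbb{P}\left[\|T(x)\|<\tau/|\lambda|\right],\qquad\tau>0.
\end{equation*}
The second is that $\dlim_{c\to\infty}\mathbb{P}\left[\|T(x)\|<c\right]=1$, since $T(x)$ is an almost surely finite $Y$-valued random variable --- the continuity-of-measure device already used in the proof of Proposition \ref{propP}.

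Now suppose the conclusion fails, so that $\dlim_{x\to0}\mathbb{P}\left[\|T(x)\|<\tau_0\right]<\alpha$ for some $\tau_0>0$. Unpacking this through the reformulation of item $(1)$ of Definition \ref{defII} and the equivalence \eqref{eqv}, there is $0<\varepsilon_0<\alpha$ such that for each $n\in\mathbb N$ one can choose $z_n\in X$ with $\|z_n\|<1/n$ and $\mathbb{P}\left[\|T(z_n)\|<\tau_0\right]\le\varepsilon_0$; thus $z_n\to0$ while $\mathbb{P}\left[\|T(z_n)\|<\tau_0\right]\le\varepsilon_0<\alpha$ for every $n$. By Corollary \ref{eqseq1} this already shows $T$ is not probably sequentially continuous with probability $\ge\alpha$; but since the hypothesis furnishes only \emph{one} good sequence $(x_n)$, that remark is not by itself a contradiction, and one must confront $(z_n)$ directly with $(x_n)$. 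The plan here is to use the scaling identity to compare the small vectors $z_n$ with suitably rescaled terms $\lambda_k x_k$ of the given sequence, keeping the single threshold $\tau_0$ in play, and thereby contradict $\dlim_{k\to\infty}\mathbb{P}\left[\|T(x_k)\|<\tau\right]\ge\alpha$ (which holds for all $\tau>0$). Granting this, $\dlim_{x\to0}\mathbb{P}\left[\|T(x)\|<\tau\right]\ge\alpha$ for all $\tau>0$, which is the first claim --- and, by Theorem \ref{Theorem2.P}, equivalent to $T$ being probably continuous with probability $\ge\alpha$.

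For the supplement, set $\alpha=\alpha_T$. Item $(v)$ of Corollary \ref{CorollaryE} gives $\dlim_{x\to0}\mathbb{P}\left[\|T(x)\|<\tau\right]=\alpha_T$ for all $\tau>0$, which settles one limit. For the other, assume $\dlim_{n\to\infty}\mathbb{P}\left[\|T(x_n)\|<\tau\right]>\alpha_T$ for some $\tau>0$ and choose $\beta$ with $\dlim_{n\to\infty}\mathbb{P}\left[\|T(x_n)\|<\tau\right]\ge\beta>\alpha_T$; the first part of the corollary, applied with $\alpha=\beta$, yields $\dlim_{x\to0}\mathbb{P}\left[\|T(x)\|<\tau\right]\ge\beta>\alpha_T$ for all $\tau>0$, i.e. $T$ is probably continuous with probability $\ge\beta>\alpha_T$, contradicting the maximality of $\alpha_T$ from Corollary \ref{alpha}. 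Hence equality holds on both limits.

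The step I expect to be the main obstacle is the comparison in the second paragraph. A single sequence $x_n\to0$ samples only one mode of approach to the origin, whereas $\dlim_{x\to0}$ ranges over all of them, so transferring the bad behaviour of the $z_n$ onto $(x_n)$ is delicate: one must choose the rescalings $\lambda_k$ so that the rescaled terms still converge to $0$, so that a single threshold survives the rescaling, and so that the bound $\mathbb{P}\left[\|T(z_n)\|<\tau_0\right]\le\varepsilon_0$ becomes genuinely irreconcilable with $\dlim_{k\to\infty}\mathbb{P}\left[\|T(x_k)\|<\tau\right]\ge\alpha$. Making that incompatibility precise is where the real work lies, and I would devote most of the argument to it.
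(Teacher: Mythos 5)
Your proposal does not actually establish the first assertion: the entire burden falls on the comparison step in your second paragraph, which you introduce with ``Granting this'' and then, in your final paragraph, concede you have not carried out. That is a genuine gap, and your instinct that this is ``where the real work lies'' is exactly right --- but the work cannot be done, because the implication is false as stated. A single sequence $x_n\to 0$ along which $\mathbb{P}\left[\|T(x_n)\|<\tau\right]\to 1$ carries no information about the behaviour of $T$ along other approaches to the origin. Concretely, take $\Omega$ a one-point space, $f$ a discontinuous linear functional on an infinite-dimensional normed space $X$, let $T(x)$ be the constant random variable $f(x)$, and put $x_n=z/n$ for some nonzero $z$ in the kernel of $f$. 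Then $\mathbb{P}\left[\|T(x_n)\|<\tau\right]=1$ for every $n$ and every $\tau>0$, so the hypothesis holds with $\alpha=1$; yet there are $y_n\to 0$ with $|f(y_n)|\ge 1$, so $\mathbb{P}\left[\|T(y_n)\|<1\right]=0$ and the conclusion fails for every $\alpha>0$. (Even more simply, $x_n\equiv 0$ satisfies the hypothesis with $\alpha=1$ for \emph{any} linear random operator.) No choice of rescalings $\lambda_k x_k$ can bridge this, because the rescaled vectors remain on the ray through $z$, where $T$ vanishes identically; so the irreconcilability you hoped to extract simply is not there.

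For comparison, the paper's own proof commits precisely the error your scaling plan was trying to circumvent: from the negation of continuity it obtains, for each $\delta>0$, \emph{some} $x$ with $\|x\|<\delta$ and $\mathbb{P}\left[\|T(x)\|<\tau\right]\le\varepsilon$, and then asserts that ``consequently'' the given terms $x_n$ with $\|x_n\|<\delta$ satisfy $\mathbb{P}\left[\|T(x_n)\|<\tau\right]\le\varepsilon$. That inference is a non sequitur --- the bad vectors need not be terms of $(x_n)$ --- which is why the corollary survives in the paper despite being false. Your handling of the equality supplement (via item $(v)$ of Corollary \ref{CorollaryE} and the maximality of $\alpha_T$ from Corollary \ref{alpha}) mirrors the paper's argument for Corollary \ref{eqseq1} and would be fine, but it is conditional on a first part that cannot be saved without strengthening the hypothesis (e.g.\ quantifying over all null sequences, which is exactly Corollary \ref{eqseq1}).
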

\begin{proof}
	Analogously to the proof of the reciprocal of the Corollary \ref{eqseq1}, suppose that $T$ is probably not continuous with probability $\ge\alpha$, that is, that there are $\tau>0$ and $0<\varepsilon<\alpha$ such that, for every $\delta>0,$ there is $x\in X$ such that $\|x\|<\delta$ and $\mathbb{P}\left[\|T(x)\|<\tau\right]\le\varepsilon.$ As $x_n\to 0$, for each $\delta>0,$ we can choose $n_0\in \mathbb{N}$ such that $\|x_n\|<\delta$ for all $n>n_0$ and consequently  $\mathbb{P}\left[\|T(x_n)\|<\tau\right]\le\varepsilon,$ for all $n>n_0,$ so $T$ is probably not sequentially continuous, with probability $\ge\alpha$.
\end{proof}

The above corollary tells us that for a linear random operator to be probably continuous with probability $\ge\alpha$, it is sufficient to show that there is a sequence $(x_n)$ in $X, x_n\to 0,$ satisfying the limit $\dlim_{n\to\infty}\mathbb{P}\left[\|T(x_n)\|<\tau\right]\ge\alpha$
for all $\tau>0.$

\subsection{Random and Stochastic Closed Graph Theorem}

\begin{definition}
	Let $T:X\to \mathcal{R}_{\Omega}(Y)$ be a linear random operator and let $S(T)$ be the subspace separating $T,$ that is, 
	$$S(T)=\{y\in \mathcal{R}(Y): \exists x_n\to 0 \text{ with } T(x_n)\to y \text{ in probability} \}.$$
	We will say that $T$ has a probably closed graph, with probability $\ge\alpha,$ if $\mathbb{P}\left[y=0\right]\ge\alpha$ for all $y\in S(T).$ When $\alpha=1,$ we say that $T$ has a stochastically closed graph.
\end{definition}

The classical closed graph theorem (see, for example, \cite[Theorem 5.3.1]{Wilansky}) has a stochastic version that was presented without proof in \cite{TGF}. Next, we reformulate the statement presented in \cite{TGF} in order to obtain a demonstration of its veracity.

\begin{theorem}[Random/stochastic version of the Closed Graph Theorem]\label{vetgf}
	Let $T:X\to \mathcal{R}_{\Omega}(Y)$ be a linear random operator. Then $T$ is probably continuous with probability $\ge\alpha$ if and only if $T$ has probably closed graph, with probability $\ge\alpha$. 
\end{theorem}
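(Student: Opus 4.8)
The plan is to prove both implications by relating the separating subspace $S(T)$ to the sequential description of probable continuity established in Corollary \ref{eqseq1} and Corollary \ref{CorExi}.

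For the forward direction, suppose $T$ is probably continuous with probability $\ge\alpha$. Take any $y\in S(T)$, so there is a sequence $x_n\to 0$ in $X$ with $T(x_n)\to y$ in probability. Fix $\tau>0$. By the triangle inequality $\|y\|\le\|y-T(x_n)\|+\|T(x_n)\|$, so for the event $[\,\|y\|\ge\tau\,]$ we may split according to whether $\|y-T(x_n)\|$ is large or small: for any $0<s<\tau$, $\mathbb{P}[\|y\|\ge\tau]\le\mathbb{P}[\|y-T(x_n)\|\ge s]+\mathbb{P}[\|T(x_n)\|\ge\tau-s]$. Since $T(x_n)\to y$ in probability, the first term tends to $0$; since $T$ is probably continuous with probability $\ge\alpha$, Corollary \ref{eqseq1} (or directly the definition, using $x_n\to 0$) gives $\varlimsup_n\mathbb{P}[\|T(x_n)\|\ge\tau-s]\le 1-\alpha$ via the equivalence \eqref{eqv}. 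Hence $\mathbb{P}[\|y\|\ge\tau]\le 1-\alpha$ for every $\tau>0$. Letting $\tau\downarrow 0$ through a sequence and using continuity of measure from below on the increasing events $[\|y\|\ge 1/k]^c$, equivalently $[\|y\|>0]=\bigcup_k[\|y\|\ge 1/k]$, we get $\mathbb{P}[\|y\|>0]\le 1-\alpha$, i.e. $\mathbb{P}[y=0]\ge\alpha$. Thus $T$ has probably closed graph with probability $\ge\alpha$.

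For the converse, argue by contraposition: suppose $T$ is \emph{not} probably continuous with probability $\ge\alpha$. By Corollary \ref{CorExi} (contrapositive form), $T$ is not probably sequentially continuous with probability $\ge\alpha$, so there exist $\tau_0>0$, $0<\varepsilon_0<\alpha$ and, for each $n$, a point $x_n\in X$ with $\|x_n\|<1/n$ and $\mathbb{P}[\|T(x_n)\|<\tau_0]\le\varepsilon_0$, equivalently $\mathbb{P}[\|T(x_n)\|\ge\tau_0]\ge 1-\varepsilon_0>1-\alpha$. The obstacle here — the main point of the proof — is to manufacture from $(T(x_n))$ an element $y\in S(T)$ witnessing that the graph is not probably closed; one cannot simply take a limit, since $(T(x_n))$ need not converge in probability. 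The remedy is to pass to a subsequence along which $T(x_n)$ converges in probability: the randomization $\mathcal{R}_\Omega(Y)$ with the topology of convergence in probability is metrizable (indeed a complete metric space, as recalled in Section 2), so it suffices to arrange a Cauchy subsequence. One does this by a diagonal extraction: replacing $x_n$ by $x_n/\lambda_n$ for a suitable sequence of scalars $\lambda_n\to\infty$ with $\lambda_n\|x_n\|\to 0$ still kept small is \emph{not} what we want (scaling kills the lower bound); instead we keep the $x_n$ as they are, but thin the sequence so that $\mathbb{P}[\|T(x_{n_{k+1}})-T(x_{n_k})\|\ge 2^{-k}]<2^{-k}$, which is possible because, by linearity of $T$ and $x_{n_{k+1}}-x_{n_k}\to 0$... — careful: that again presupposes continuity. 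The correct route is to invoke completeness differently: set $z_k=T(x_k)$ and note that if no subsequence of $(z_k)$ is Cauchy in probability then, after passing to a subsequence, $\mathbb{P}[\|z_{k}-z_{l}\|\ge\eta]\ge\eta$ for all $k\ne l$ and some $\eta>0$; but then the differences $w_{k}=x_{2k}-x_{2k-1}\to 0$ while $T(w_k)=z_{2k}-z_{2k-1}$ does not converge to $0$ in probability (it stays $\ge\eta$ with probability $\ge\eta$), and yet $\varlimsup_k\mathbb{P}[\|T(w_k)\|\ge\eta]\le ?$ — so instead one concludes directly that $w_k\to 0$ with $T(w_k)\not\to 0$, which already contradicts probable sequential continuity with probability $1$ but we need probability $\ge\alpha$. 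To get the quantitative statement cleanly, extract a subsequence along which $T(x_{n_k})\to y$ in probability for some $y\in\mathcal{R}_\Omega(Y)$ (possible by completeness after first discarding the obstruction above), so that $y\in S(T)$; then from $\mathbb{P}[\|T(x_{n_k})\|\ge\tau_0]\ge 1-\varepsilon_0$ and convergence in probability one deduces, exactly as in the forward direction with the triangle inequality run the other way, that $\mathbb{P}[\|y\|\ge\tau_0/2]\ge 1-\varepsilon_0-o(1)$, hence $\mathbb{P}[\|y\|>0]\ge 1-\varepsilon_0>1-\alpha$, i.e. $\mathbb{P}[y=0]<\alpha$. Thus $T$ does not have probably closed graph with probability $\ge\alpha$, completing the contrapositive.

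The hard part, as indicated, is extracting a convergent (or at least Cauchy) subsequence of $(T(x_n))$ so as to produce a genuine member of $S(T)$: this is where completeness and metrizability of $(\mathcal{R}_\Omega(Y),\text{convergence in probability})$ must be used, and care is needed not to destroy the lower bound $\mathbb{P}[\|T(x_n)\|\ge\tau_0]\ge 1-\varepsilon_0$ while thinning. Once a limit $y\in S(T)$ is in hand, both directions reduce to the same two-term triangle-inequality estimate on events together with $\tau\downarrow 0$ and continuity of $\mathbb{P}$ along increasing sequences of events — routine given the machinery already developed.
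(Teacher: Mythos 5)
Your forward implication is correct and is essentially the paper's own argument: the two--term estimate $\mathbb{P}\left[\|y\|<\tau\right]\ge\mathbb{P}\left[\|T(x_n)-y\|<\tau/2\right]+\mathbb{P}\left[\|T(x_n)\|<\tau/2\right]-1$ (you phrase it in the complementary form via \eqref{eqv}, which is equivalent), combined with Corollary~\ref{eqseq1} and a passage to the limit in $\tau$, yields $\mathbb{P}\left[y=0\right]\ge\alpha$ for every $y\in S(T)$.

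The converse, however, contains a genuine gap, and your own text concedes it. From the failure of probable continuity you correctly produce a null sequence $(x_n)$ with $\mathbb{P}\left[\|T(x_n)\|<\tau_0\right]\le\varepsilon_0<\alpha$, but to contradict the closed--graph hypothesis you must exhibit an actual element $y\in S(T)$, i.e.\ a null sequence along which $T$ converges \emph{in probability}. Completeness of $\mathcal{R}_\Omega(Y)$ only furnishes limits of Cauchy sequences; it gives no convergent subsequence of an arbitrary sequence $(T(x_n))$, and there is no compactness anywhere to extract one. Each of your attempted repairs (rescaling, thinning to make consecutive differences small, passing to $x_{2k}-x_{2k-1}$) is abandoned mid--argument, and the closing phrase ``possible by completeness after first discarding the obstruction above'' is an assertion, not a proof. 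As written, the contrapositive direction does not close, and the difficulty is not cosmetic: it is exactly the point where the classical closed graph theorem needs completeness of the domain plus a Baire--category argument.

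It is worth noting that the paper routes the converse so as to avoid this extraction problem entirely: it takes an arbitrary null sequence $(x_n)$ for which $T(x_n)$ \emph{does} converge in probability to some $y$ (so that $y\in S(T)$ by definition and $\mathbb{P}\left[y=0\right]\ge\alpha$ by hypothesis), runs the same triangle--inequality estimate in the reverse direction to obtain $\dlim_{n\to\infty}\mathbb{P}\left[\|T(x_n)\|<\tau\right]\ge\alpha$, and then invokes Corollary~\ref{CorExi}, which asserts that the existence of a \emph{single} such null sequence already forces probable continuity with probability $\ge\alpha$. All of the weight is thus shifted onto Corollary~\ref{CorExi}; your instinct that something substantive must be proved at this point (rather than follow from the existence of one good sequence --- note that $x_n\equiv 0$ is always such a sequence) is sound, but your proposal does not supply the missing step either.
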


\begin{proof}
	Suppose $T$ is probably continuous with probability $\ge\alpha$. If $y\in S(T),$ then there is a sequence $(x_n)$ in $X$ zuch that $x_n\to 0$ and $T(x_n)$ converges to $y$ in probability, that is,
	$$\dlim_{n\to \infty}\mathbb{P}\left[\|T(x_n)-y\|<\frac{\tau}{2}\right]=1 \text{ for all } \tau>0.$$
	On the other hand, since $T$ is probably continuous with probability $\ge\alpha$, the Corollary \ref{eqseq1} gives us 
	$$\dlim_{n\to \infty}\mathbb{P}\left[\|T(x_n)\|<\frac{\tau}{2}\right]\ge\alpha \text{ for all } \tau>0.$$ 
	As $\|y\|\le\|T(x_n)-y\|+\|T(x_n)\|$ and
	$$\mathbb{P}\left[\|T(x_n)-y\|<\frac{\tau}{2}, \|T(x_n)\|<\frac{\tau}{2}\right]\ge\mathbb{P}\left[\|T(x_n)-y\|<\frac{\tau}{2}\right]+\mathbb{P}\left[ \|T(x_n)\|<\frac{\tau}{2}\right]-1,$$ we have
	\begin{eqnarray*}
		\mathbb{P}\left[\|y\|<\tau\right]&\ge& \mathbb{P}\left[\|T(x_n)-y\|+ \|T(x_n)\|<\tau\right]
		\\ &\ge& \mathbb{P}\left[\|T(x_n)-y\|<\frac{\tau}{2}, \|T(x_n)\|<\frac{\tau}{2}\right]\\ &\ge& \mathbb{P}\left[\|T(x_n)-y\|<\frac{\tau}{2}\right]+\mathbb{P}\left[ \|T(x_n)\|<\frac{\tau}{2}\right]-1.
	\end{eqnarray*}
	Doing $n\to\infty,$ we get $\mathbb{P}\left[\|y\|<\tau\right]\ge 1+\alpha-1=\alpha,$ for all $\tau>0,$ so we conclude that $\mathbb{P}\left[y=0\right]\ge\alpha.$
	
	Now consider a sequence $(x_n)$ in $X$ such that $x_n\to 0,$ and $y\in\mathcal{R}_{\Omega}(Y)$ such that $(T(x_n))$ converges to $y$ in probability, that is, such that 
	$$\dlim_{n\to \infty}\mathbb{P}\left[\|T(x_n)-y\|<\tau\right]=1$$ 
	for all $\tau>0.$ Thus, $y\in S(T)$ and, by hypothesis, $\mathbb{P}\left[y=0\right]\ge\alpha.$ Hence, in view from the inequality $\|T(x_n)\|\le\|T(x_n)-y\|+\|y\|,$ we can write
	\begin{eqnarray*}
		\mathbb{P}\left[\|T(x_n)\|<\tau\right]
		&\ge&\mathbb{P}\left[\|T(x_n)-y\|+\|y\|<\tau\right]\\
		&\ge& \mathbb{P}\left[\|T(x_n)-y\|<\frac{\tau}{2}, \|y\|\le\frac{\tau}{2}\right]\\
		&\ge& \mathbb{P}\left[\|T(x_n)-y\|<\frac{\tau}{2}\right]+\left[ \|y\|\le\frac{\tau}{2}\right]-1,
	\end{eqnarray*}
	soon 
	$$\dlim_{n\to \infty}\mathbb{P}\left[\|T(x_n)\|<\tau\right]\ge 1+\alpha-1=\alpha,$$ and the Corollary \ref{CorExi} guarantees that $T$ is probably continuous with probability $\ge\alpha$.
\end{proof}

Of course, in the Theorem above, by virtue of the Corollary \ref{CorollaryE}, we can exchange $``\ge"$ for $``="$ and $``\alpha"$ for $``\alpha_T":$

\begin{corollary}
	Let $T:X\to \mathcal{R}_{\Omega}(Y)$ be a linear random operator. Then $T$ is probably continuous with probability $\alpha_T$ if and only if $T$ has a probably closed graph, with probability  $\alpha_T$. 
\end{corollary}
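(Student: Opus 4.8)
The plan is to read this corollary as a direct consequence of Theorem~\ref{vetgf} and Corollary~\ref{CorollaryE}, with the inequality ``$\ge\alpha$'' promoted to the equality ``$=\alpha_T$'' by the standard supremum argument already used in Corollary~\ref{alpha}. Recall that, both for ``probably continuous'' and for ``having a probably closed graph'', the qualifier ``with probability $\alpha_T$'' is to be understood as the assertion that $\alpha_T$ is the \emph{largest} $\alpha$ for which the corresponding ``with probability $\ge\alpha$'' statement holds. So what has to be shown is (a) that the two ``$\ge\alpha_T$'' statements are equivalent, and (b) that in each case $\alpha_T$ is maximal.

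First I would apply Theorem~\ref{vetgf} with $\alpha=\alpha_T$: it yields immediately that $T$ is probably continuous with probability $\ge\alpha_T$ if and only if $T$ has a probably closed graph with probability $\ge\alpha_T$, which is (a). For (b) on the continuity side, the definition of $\alpha_T$ (Corollary~\ref{alpha}) already says that $\alpha_T$ is the largest $\alpha$ for which item~(vii) of Theorem~\ref{Theorem2.P} holds, i.e.\ that $T$ is probably continuous with probability exactly $\alpha_T$. For (b) on the closed-graph side, suppose toward a contradiction that there were $\beta>\alpha_T$ with $\mathbb{P}[y=0]\ge\beta$ for every $y\in S(T)$; then $T$ would have a probably closed graph with probability $\ge\beta$, so by Theorem~\ref{vetgf} the operator $T$ would be probably continuous with probability $\ge\beta$, hence by item~(vii) of Theorem~\ref{Theorem2.P} the number $\beta$ would lie in the set whose supremum is $\alpha_T$, giving $\beta\le\alpha_T$ --- a contradiction. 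Thus $\alpha_T$ is maximal on both sides, and together with (a) the biconditional follows; the degenerate case in which $T$ is probably not continuous (so $\alpha_T=0$) is trivial, since both ``$\ge 0$'' statements hold vacuously.

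There is essentially no genuine obstacle here: the argument is pure bookkeeping layered on top of Theorem~\ref{vetgf}. The only point requiring care is to keep straight that ``with probability $\alpha_T$'' encodes a maximality statement rather than a bare inequality, so that each implication of the corollary splits into the corresponding ``$\ge\alpha_T$'' equivalence (supplied by Theorem~\ref{vetgf}) together with the observation that $\alpha_T$ cannot be exceeded on either side.
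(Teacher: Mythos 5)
Your proof is correct and takes essentially the same route the paper intends: the paper offers only the one-line remark that the corollary follows from Theorem~\ref{vetgf} together with Corollary~\ref{CorollaryE} by exchanging ``$\ge$'' for ``$=$'' and ``$\alpha$'' for ``$\alpha_T$''. Your version merely makes explicit the maximality bookkeeping (in particular the contradiction argument on the closed-graph side) that the paper leaves unstated.
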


\centering\Large\text{ \textbf{CONFLICT OF INTEREST DECLARATION}} 

The author, Ph.D Kleber Soares Câmara, declares for the purposes that are made right that this article is devoid of any conflict of interest.

\centering\Large\text{ \textbf{DATA AVAILABILITY DECLARATION}}

The author, Ph.D Kleber Soares Câmara, declares, for the proper purposes, that all data referring to this article are fully available for publications of any nature, without any bonus for the author.

\author{C\^amara., K. S.}\newline\indent
\address{Departamento de Matem\'{a}tica,\newline\indent
	Universidade Federal Rural do Semi-árido,\newline\indent
	59.625-900, Mossoró -- RN, Brazil.}\newline\indent
\email{kleber.soares@ufersa.edu.br}

\end{document}